\newtheorem{theorem}{Theorem}[section]
\newtheorem{corollary}[theorem]{Corollary}
\theoremstyle{definition}
\newtheorem{example}{Example}[section]
\newcommand{\rn}{\mathbb{R}}
\newcommand{\ds}{\displaystyle}
\newcommand{\C}{\mathbb{C}}
\newcommand{\inprod}[2]{\langle#1 | #2\rangle}
\begin{document}

\title{Densely Defined Multiplication on the Sobolev Space}
\author{Joel A. Rosenfeld}
\address{358 Little Hall, PO Box 118105
Gainesville FL 32611-8105}
\email{joelar@ufl.edu}
\date{June 7, 2013}

\begin{abstract}
Following Sarason's classification of the densely defined multiplication operators over the Hardy space, we classify the densely defined multipliers over the Sobolev space, $W^{1,2}[0,1]$.  In this paper we find that the collection of such multipliers for the Sobolev space is exactly the Sobolev space itself.  This sharpens a result of Shields concerning bounded multipliers.  The densely defined multiplication operators over the subspace $W_0 = \{ f \in W^{1,2}[0,1] : f(0)=f(1)=0 \}$ are also classified.  In this case the densely defined multiplication operators can be written as a ratio of functions in $W_0$ where the denominator is non-vanishing. This is proved using a contructive argument.
\end{abstract}

\maketitle

\section{Introduction}
Recall that a reproducing kernel Hilbert space (RKHS), $H$, over a set $X$ is a Hilbert space of functions $f: X \to \mathbb{C}$ for which the evaluation functionals $E_x f = f(x)$ are bounded. By the Riesz representation theorem, this tells us that for each $x \in X$ there is a function $k_x \in H$ such that $f(x) = \inprod{f}{k_x}$ for all $f \in H$.

Given a RKHS over a set $X$, $H$, a function $\phi: X \to H$ is a densely defined multiplier if the set $$D(M_\phi) = \{ f \in H : \phi f \in H \}$$ is dense in $H$.  The multiplication operator, $M_\phi f = \phi f$, is a closed operator \cite{franek}, so if $D(M_\phi) = H$, $M_\phi$ is a bounded operator by the closed graph theorem. Moreover, for all $x \in X$, the reproducing kernel $k_x$ is an eigenvector for the (densely defined) adjoint, $M_\phi^*$, with eigenvalue $\overline{\phi(x)}$ \cite{franek}.

Bounded multiplication operators are a well studied class of operators in Operator Theory. They provide straightforward examples for use in spectral theory, are viewed as transfer functions for linear systems, and in reproducing kernel Hilbert spaces, they interact nicely with the reproducing kernels. The classification of these operators is an important part of operator theory, and this classification has been carried out for the Hardy space \cite{aglermccarthy}, Fock space \cite{aglermccarthy}, Dirichlet space \cite{stegenga}, and the Bergman space \cite{luecking}.  For the classical Hardy space, $H^2$, the bounded multiplication operators are those operators with symbol $\phi \in H^\infty$, for example.

Bounded multipliers have also been classified in the case of the Sobolev Space, $$W^{1,2}[0,1] = \{ f: [0,1] \to \mathbb{C} : f \text{ is absolutely continuous }, f' \in L^2[0,1] \}$$ equipped with the inner product: $$\inprod{f}{g} = \int_0^1 f(x)\overline{g(x)} + f'(x)\overline{g'(x)} dx.$$ Alan Shields showed that the Sobolev space is a space of functions where the collection of multipliers is exactly the space itself \cite{halmos}.  This contrasts with the Hardy space, where the collection of multipliers is strictly contained within the Hardy space. Jim Agler in \cite{agler}, showed that like the multipliers for the Hardy space, the multiplication operators for the Sobolev space have the Nevanlinna-Pick property.

A thorough investigation of the bounded multipliers of Sobolev spaces on spaces of higher dimensions is carried out in \cite{mazya}. In the present work, densely defined multiplication operators are investigated over the RKHS $W^{1,2}[0,1]$. $W^{1,2}[0,1]$ is a special case of Sobolev spaces that has a reproducing kernel, since in general, elements of a Sobolev space are equivalence classes of functions that agree almost everywhere \cite{evans}.

The main result of this paper, Theorem \ref{allbdd}, states that Shields' result holds when you relax bounded to densely defined.  That is the collection of \emph{densely defined} multipliers over the Sobolev space is the space itself. In particular, the Sobolev space provides an example of a space where every densely defined multiplier is in fact bounded.

In contrast, the densely defined multipliers over the subspace $$W_0 = \{ f \in W^{1,2}[0,1] : f(0)=f(1)=0 \}$$ are not necessarily bounded operators, Theorem \ref{ddms}. It is demonstrated in Theorem \ref{zerofree} that every function $\phi$ that is a symbol for a densely defined multiplication operator over $W_0$ can be written as a ratio of functions $h, f \in W_0$ where $f$ does not vanish on $(0,1)$. This can be compared to Sarason's characterization of densely defined multipliers over the Hardy space $H^2$, where the densely defined multipliers are those functions in the Smirnov class, $N^{+} = \{ b/a : b, a \in H^\infty, a \text{ outer} \}$ \cite{sarason}. In particular densely defined multipliers over the Hardy space can be expressed as a ratio of functions in $H^2$ such that the denominator is nonvanishing.

\section{Densely Defined Multipliers for the Sobolev Space}

%
%
%
%

\begin{example}As we will see in the following theorem, the densely defined multipliers of $W_0$ are those functions that are well behaved everywhere but the endpoints of $[0,1]$.  Take for instance the topologist's sine curve $\phi(x) = \sin(1/x)$.


On any interval bounded away from zero, $\sin(1/x)$ is smooth.  To determine that $D(M_\phi)$ is dense, it is enough to recognize that the set of functions that vanish in a neighborhood of zero are in $D(M_\phi)$ and this collection of functions is dense in $W_0$.

We can apply the same reasoning to show that two other poorly behaved functions are symbols for densely defined multiplication operators: $\phi(x) = 1/x$ and $\exp(1/x)$.  Here $1/x$ is has a simple pole at $0$ and $e^{1/x}$ has an essential singularity.\end{example}

The following theorem can be compared to Theorem 2.3.2 in \cite{mazya} for bounded multiplication operators.

\begin{theorem}\label{ddms} A function $\phi: (0,1) \to \C$ is the symbol for a densely defined multiplier on $W_0$ iff $\phi \in W^{1,2}[a,b]$ for all $[a,b] \subset (0,1)$.\end{theorem}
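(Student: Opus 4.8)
The plan is to prove both implications by reducing everything to the behavior of $\phi$ on compact subintervals $[a,b] \subset (0,1)$, where $W^{1,2}$ functions are absolutely continuous, bounded, and closed under products. For sufficiency, suppose $\phi \in W^{1,2}[a,b]$ for every $[a,b] \subset (0,1)$. I would first show that $D(M_\phi)$ contains every $f \in W_0$ that vanishes identically on $[0,\delta] \cup [1-\delta,1]$ for some $\delta > 0$, mirroring the reasoning in the Example. For such an $f$ the product $\phi f$ is supported in $[\delta, 1-\delta]$, and on a slightly larger compact subinterval both $\phi$ and $f$ lie in $W^{1,2}$; since $W^{1,2}$ functions on a compact interval are bounded, the product rule $(\phi f)' = \phi' f + \phi f'$ places $(\phi f)'$ in $L^2$ there, so $\phi f \in W^{1,2}$ on the middle interval and vanishes near both endpoints, whence $\phi f \in W_0$ and $f \in D(M_\phi)$.

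The remaining part of sufficiency is to show these functions are dense in $W_0$, and this is where I expect the main obstacle to lie. Given $f \in W_0$, I would multiply by a piecewise-linear cutoff $\eta_n$ equal to $0$ on $[0,1/n] \cup [1-1/n,1]$ and $1$ on $[2/n, 1-2/n]$, and estimate $(\eta_n f - f)' = \eta_n' f + (\eta_n - 1)f'$. The second term tends to $0$ in $L^2$ by dominated convergence. The first term is delicate, since $\eta_n'$ has size $n$ on intervals of length $1/n$ near each endpoint; here I would use crucially that $f(0) = f(1) = 0$, so that by Cauchy--Schwarz $|f(x)|^2 \le x \int_0^x |f'|^2$ near $0$ (and symmetrically near $1$), which forces $\int |\eta_n' f|^2 \to 0$. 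This is precisely the estimate that would fail in the full space $W^{1,2}[0,1]$, and it explains why $W_0$ admits such poorly behaved symbols.

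For necessity, assume $D(M_\phi)$ is dense and fix $[a,b] \subset (0,1)$. I would produce a single $f \in D(M_\phi)$ bounded away from zero on $[a,b]$: choose a trapezoidal $g \in W_0$ with $g \equiv 1$ on $[a,b]$, and use density together with the fact that $W^{1,2}[0,1]$ is an RKHS (so point evaluations are uniformly bounded and norm convergence implies uniform convergence) to find $f \in D(M_\phi)$ with $\| f - g \|_\infty < 1/2$, hence $|f| \ge 1/2$ on $[a,b]$. On $[a,b]$ we then have $f \in W^{1,2}[a,b]$ with $1/f \in W^{1,2}[a,b]$, since $(1/f)' = -f'/f^2$ is in $L^2$ once $f$ is bounded below in modulus, and $\phi f \in W^{1,2}[a,b]$ because $f \in D(M_\phi)$ gives $\phi f \in W_0$. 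Using again that products of $W^{1,2}$ functions on a compact interval remain $W^{1,2}$, I conclude $\phi = (\phi f)/f \in W^{1,2}[a,b]$, which completes the proof since $[a,b]$ was arbitrary.
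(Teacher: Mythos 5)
Your proof is correct, and while your sufficiency half follows the paper's route, your necessity argument is genuinely different. For sufficiency, the paper likewise shows that every compactly supported $f \in W_0$ lies in $D(M_\phi)$, but it simply \emph{asserts} that such functions are dense in $W_0$; you actually prove this, via the cutoff functions $\eta_n$ and the estimate $|f(x)|^2 \le x \int_0^x |f'|^2$ (valid precisely because $f(0)=f(1)=0$), which is indeed the step that would fail in $W^{1,2}[0,1]$ — so your write-up supplies a detail the paper leaves to the reader. For necessity, the paper argues locally: density forces, for each $x_0$, some $f \in D(M_\phi)$ with $f(x_0) \neq 0$ (else $D(M_\phi) \subset \{k_{x_0}\}^\perp$), and then compactness of $[a,b]$ yields a finite cover by intervals $[s_i,t_i]$ carrying functions $f_i$ bounded away from zero, on each of which the product-rule argument is run and the conclusions glued together. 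You instead produce a \emph{single} $f \in D(M_\phi)$ with $|f| \ge 1/2$ on all of $[a,b]$, by norm-approximating a trapezoid $g \equiv 1$ on $[a,b]$ and upgrading to uniform closeness via the uniform bound $\sup_x \|k_x\| < \infty$ on point evaluations; then $\phi = (\phi f)\cdot(1/f) \in W^{1,2}[a,b]$ in one step. Your version invokes the full strength of density (norm approximation plus the uniform embedding into the sup norm), whereas the paper needs only the weaker fact that no kernel function is orthogonal to the domain; in exchange you avoid the patching entirely and obtain on each compact subinterval a clean quotient representation with non-vanishing denominator, which anticipates the paper's Theorem~\ref{zerofree}. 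Both arguments close identically: where the denominator is bounded below in modulus, $\phi$ is absolutely continuous and the product rule puts $\phi'$ in $L^2$, hence $\phi \in W^{1,2}[a,b]$.
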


\begin{proof} First suppose that $\phi$ is a densely defined multiplier on $W_0$.  For each $x_0 \in (0,1)$ there is a function $f \in D(M_\phi)$ such that $f(x_0) \neq 0$, this follows from the density of the domain.  If all functions in $f \in D(M_\phi)$ vanished at $x_0$, then $D(M_\phi) \subset \{ k_{x_0}\}^\perp$.  This would mean that $\overline{D(M_\phi)} = \left(D(M_\phi)^\perp\right)^\perp \subset \{k_{x_0}\}^\perp$, and this contradicts the density of the domain.  Let $h = M_\phi f$, so that $\phi(x) = h(x)/f(x)$ in a neighborhood of $x_0$.  The functions $h$ and $f$ are differentiable almost everywhere in a neighborhood of $x_0$, so then is $\phi$.  Since $x_0$ is arbitrary, $\phi$ is differentiable almost everywhere on $(0,1)$.

Fix $[a,b] \subset (0,1)$.  By way of compactness, there exists a finite collection of functions $\{f_1, f_2, ..., f_k\} \subset D(M_g)$ together with subsets $$[a,t_1), \ (s_2,t_2), \ (s_3,t_3), \ ... , \ (s_{k-1}, t_{k-1}), \ (s_k, b]$$ so that the subsets cover $[a,b]$ and $f_i$ is bounded away from zero on $[s_i,t_i]$, here we take $s_1 = a$ and $t_k = b$.

Since $f_i$ is bounded away from zero on $[s_i, t_i]$, $\phi$ is absolutely continuous on each $[s_i, t_i]$ and hence on $[a,b]$.  We wish to show that $\phi \in W^{1,2}[a,b]$ so we set out to show $\phi$ and $\phi^\prime$ are in $L^2[a,b]$.  Set $h_i = \phi f_i$ and by the product rule we find $h_i^\prime = \phi^\prime f_i + f_i^\prime \phi$ almost everywhere.

Since $\phi$ is continuous on $[s_i, t_i]$, we have $\phi \in L^2[s_i,t_i]$.  The function $f^\prime$ is also in  $L^2[s_i,t_i]$ which implies $\phi f^\prime \in L^2[s_i,t_i]$, because $\phi$ is bounded.  Therefore $h^\prime_i-\phi f^\prime_i = \phi^\prime f_i \in L^2[s_i,t_i]$.  By construction, $f_i$ does not vanish on $[s_i,t_i]$, so $$\inf_{[s_i, t_i]} |f_i(x)|^2\int_0^1 |\phi^\prime|^2 dx \le \int_0^1 |\phi^\prime f_i|^2 dx < \infty.$$  Thus $\phi^\prime \in L^2[s_i,t_i]$ for each $i=1,2,...,k$, and $\phi \in W^{1,2}[a,b]$.

For the other direction, suppose that $\phi \in W^{1,2}[a,b]$ for all $[a,b] \subset (0,1)$.  Let $f \in W_0$ such that $f$ has compact support in $(0,1)$.  Let $[a,b]$ be a compact subset of $(0,1)$ containing the support of $f$.  Outside of $[a,b]$, $f$ is identically zero and so $f^\prime \equiv 0$ as well.

The function $\phi f$ is in $L^2[a,b]$ since it is continuous.   Also the function $\phi f^\prime \in L^2[a,b]$ since $\phi$ is continuous and $f^\prime \in L^2[a,b]$, and $\phi^\prime f \in L^2[a,b]$ for the opposite reason.  Thus $h := \phi f \in W^{1,2}[a,b]$, and since it vanishes outside the interval, $h \in W_0$.  Therefore $f \in D(M_\phi)$, and compactly supported functions are dense in $W_0$.  Thus $\phi$ is a densely defined multiplication operator.\end{proof}

This proof can be extended to prove the following:

\begin{theorem}\label{allbdd}For the Sobolev space, $W^{1,2}[0,1]$, the collection of symbols of densely defined multipliers is $W^{1,2}[0,1]$.  In particular, all the densely defined multipliers are bounded.\end{theorem}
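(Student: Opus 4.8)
The plan is to recycle the proof of Theorem \ref{ddms} almost verbatim, the one essential change being that the points $0$ and $1$ now lie in the index set $X = [0,1]$ of the reproducing kernel Hilbert space, so the density argument may be run at the endpoints as well as in the interior.

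For the easy inclusion, suppose $\phi \in W^{1,2}[0,1]$. Every element of $W^{1,2}[0,1]$ is absolutely continuous on the compact interval $[0,1]$ and hence bounded, so for any $f \in W^{1,2}[0,1]$ the products $\phi f$, $\phi' f$, and $\phi f'$ all lie in $L^2[0,1]$; by the product rule $(\phi f)' = \phi' f + \phi f'$, so $\phi f \in W^{1,2}[0,1]$. Thus $D(M_\phi) = W^{1,2}[0,1]$, which is trivially dense, and $M_\phi$ is everywhere defined, hence bounded by the closed graph theorem. This is just Shields' observation that $W^{1,2}[0,1]$ is a multiplier algebra, so every $\phi \in W^{1,2}[0,1]$ is a bounded, \emph{a fortiori} densely defined, multiplier.

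For the reverse inclusion, suppose $\phi$ is a densely defined multiplier on $W^{1,2}[0,1]$. For each $x_0 \in [0,1]$, including $x_0 = 0$ and $x_0 = 1$ whose evaluation functionals are bounded and so furnish reproducing kernels $k_0$ and $k_1$, density of $D(M_\phi)$ forbids the inclusion $D(M_\phi) \subset \{k_{x_0}\}^\perp$ exactly as in Theorem \ref{ddms}, so there is an $f \in D(M_\phi)$ with $f(x_0) \neq 0$. Continuity of $f$ makes it bounded away from zero on a relatively open neighborhood of $x_0$ in $[0,1]$, and there, writing $h = M_\phi f$, we have $\phi = h/f$, a ratio of $W^{1,2}$ functions with nonvanishing denominator. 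Compactness of the full closed interval $[0,1]$ then yields a finite subcover and functions $f_1, \dots, f_k \in D(M_\phi)$ each bounded away from zero on its piece; the computation from Theorem \ref{ddms}, namely $h_i' = \phi' f_i + f_i' \phi$ with $\phi$, $\phi f_i'$, and $h_i'$ all in $L^2$ forcing $\phi' f_i \in L^2$ and hence $\phi' \in L^2$, shows $\phi \in W^{1,2}$ on each piece, and patching gives $\phi \in W^{1,2}[0,1]$.

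The main obstacle, and the only genuine departure from Theorem \ref{ddms}, is the treatment of the endpoints. In the $W_0$ setting every admissible test function vanishes at $0$ and $1$, so the density argument produces nonvanishing functions only on $(0,1)$ and concludes no more than $\phi \in W^{1,2}[a,b]$ for each $[a,b] \subset (0,1)$, leaving $\phi$ free to blow up at the endpoints as the examples $1/x$ and $e^{1/x}$ show. Over the full space $W^{1,2}[0,1]$ the genuine reproducing kernels $k_0$ and $k_1$ rule this out: density now controls $\phi$ right up to and including $0$ and $1$, upgrading ``$W^{1,2}$ on every compact subinterval of $(0,1)$'' to ``$W^{1,2}$ on all of $[0,1]$,'' which is exactly the extra information needed to place $\phi$ in $W^{1,2}[0,1]$ and, by the forward inclusion, make $M_\phi$ bounded.
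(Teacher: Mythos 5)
Your proof is correct and takes essentially the same route as the paper: the same reproducing-kernel density argument and compactness cover, now extended to the endpoints $0$ and $1$ precisely because functions in $W^{1,2}[0,1]$ need not vanish there, followed by the product-rule computation from Theorem \ref{ddms} to get $\phi, \phi' \in L^2$ on each piece and hence $\phi \in W^{1,2}[0,1]$. The only material you add is the explicit verification of the easy inclusion (that $W^{1,2}[0,1]$ multiplies itself into itself, so $D(M_\phi)$ is everything and the closed graph theorem gives boundedness), which the paper leaves implicit by citing Shields.
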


\begin{proof}Taking advantage of compactness, we can find a finite collection of functions $f_1, f_2, ... , f_k \in D(M_\phi)$ and intervals $$[0,t_1), \ (s_2,t_2), \ (s_3,t_3), \ ... , \ (s_{k-1}, t_{k-1}), \ (s_k, 1]$$ that cover $[0,1]$ for which $f_i$ is bounded away from zero on $[s_i, t_i]$.  (Taking $s_1 = 0$ and $t_k = 1$.)

This time we are allowed to let $s_1=0$ and $t_k=1$, since the Sobolev space does not require that functions vanish at the endpoints.  This means we can find two functions that do not vanish at $0$ and $1$ respectively.

Running the same argument as in the previous theorem we can see that $\phi \in W^{1,2}[0,1]$.  This means that $D(M_\phi) = W^{1,2}[0,1]$ and $M_\phi$ is a bounded multiplication operator.
\end{proof}
For the Hardy space there are many more densely defined multipliers than bounded ones \cite{sarason}.  In fact the Hardy space is properly contained inside of its collection of densely defined multipliers, the Smirnov class.  In the Sobolev space we see that the collection of densely defined multipliers is exactly the Sobolev space itself, and they are all bounded. The same methods can be used to show the following corollary:

\begin{corollary}Given the Sobolev space $W^{1,2}(\rn)$, a function $\phi$ is a densely defined multiplier for $W^{1,2}(\rn)$ iff $\phi \in W^{1,2}(E)$ for all compact intervals $E$ of $\rn$.\end{corollary}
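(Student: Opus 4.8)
The plan is to mirror the proof of Theorem \ref{ddms}, treating $\pm\infty$ as the analogues of the endpoints $0$ and $1$; indeed, this corollary is really the analogue of the $W_0$ result rather than of Theorem \ref{allbdd}, since unbounded multipliers such as $\phi(x) = x$ survive precisely because every element of $W^{1,2}(\rn)$ decays at infinity. Before running that argument I would first confirm that $W^{1,2}(\rn)$ is genuinely a RKHS, which is what licenses the orthogonality reasoning in the forward direction. This follows from the pointwise bound $|f(x)|^2 = \int_{-\infty}^x (|f|^2)'\,dt \le \int_{\rn} |f|^2 + |f'|^2\,dt = \norm{f}^2$ (using that $f \to 0$ at $-\infty$), so each evaluation functional is bounded and $W^{1,2}(\rn)$ carries a reproducing kernel $k_x$.

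For the forward direction, suppose $\phi$ is a densely defined multiplier and fix a compact interval $E = [a,b]$. The argument is verbatim that of Theorem \ref{ddms}: density of $D(M_\phi)$ forces, for each $x_0$, a function $f \in D(M_\phi)$ with $f(x_0) \neq 0$ (otherwise $\overline{D(M_\phi)} \subset \{k_{x_0}\}^\perp$), so that $\phi = (M_\phi f)/f$ is differentiable almost everywhere near $x_0$. Covering the compact set $E$ by finitely many intervals on which functions $f_1, \dots, f_k \in D(M_\phi)$ are bounded away from zero, the product rule $h_i' = \phi' f_i + f_i' \phi$ together with the estimate $\inf_{E}|f_i|^2 \int_E |\phi'|^2\,dx \le \int_E |\phi' f_i|^2\,dx < \infty$ gives $\phi' \in L^2(E)$, and continuity of $\phi$ gives $\phi \in L^2(E)$. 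Hence $\phi \in W^{1,2}(E)$.

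For the converse, the one genuinely new ingredient is that compactly supported functions are dense in $W^{1,2}(\rn)$; this replaces the elementary density fact used on $[0,1]$, which no longer applies verbatim because the domain is unbounded. I would establish it by truncation: choosing cutoffs $\chi_n$ equal to $1$ on $[-n,n]$, supported in $[-n-1,n+1]$, with $\norm{\chi_n'}_\infty$ bounded, one checks $\chi_n f \to f$ in $W^{1,2}(\rn)$ by dominated convergence on $f$ and $f'$ together with $\norm{\chi_n' f}_{L^2} \to 0$. Granting this, any compactly supported $f$ with support in some $[a,b]$ satisfies $\phi f \in W^{1,2}[a,b]$ by the product rule (since $\phi$ is continuous hence bounded on $[a,b]$, we get $\phi f' \in L^2$, while $\phi' f \in L^2$ for the opposite reason), and $\phi f$ vanishes off $[a,b]$, so $\phi f \in W^{1,2}(\rn)$. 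Thus $f \in D(M_\phi)$, and $D(M_\phi)$ is dense.

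The routine parts — the covering argument and the $L^2$ estimates — transfer unchanged from Theorem \ref{ddms}, so I expect the main obstacle to be the converse's density step: unlike the compact-interval case, the domain is now unbounded, and controlling $\norm{\chi_n' f}_{L^2}$ in the truncation argument (together with first confirming that the RKHS structure survives on $\rn$) is the only place where genuinely new work is required.
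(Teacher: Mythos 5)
Your proposal is correct and follows essentially the same route the paper intends: the paper gives no separate proof for this corollary, stating only that ``the same methods'' as in Theorems \ref{ddms} and \ref{allbdd} apply, and your argument is exactly that transfer, with the two $\rn$-specific prerequisites (the reproducing-kernel property of $W^{1,2}(\rn)$ and the density of compactly supported functions via truncation) correctly identified and filled in. Your observation that the corollary behaves like the $W_0$ case rather than Theorem \ref{allbdd} --- since symbols such as $\phi(x)=x$ give densely defined but unbounded multipliers --- is also accurate.
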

\section{Local to Global Non-Vanishing Denominator}
We saw in Theorem \ref{ddms} that for any point $x \in (0,1)$ we can find a function in the domain that does not vanish in a neighborhood of that point.  In other words, we used a local non-vanishing property.  Now that we have an explicit description of the densely defined multipliers of $W_0$, we can sharpen this to finding a globally nonvanishing function inside of the domain.  This means that a densely defined multiplication operator, $\phi$, on $W_0$ can be expressed as a ratio of two functions in $W_0$ where the denominator is non-vanishing.

Ideally given any densely defined multiplication operator over a Hilbert function space $H$, we would like to express its symbol as a ratio of two functions from $H$ such that the denominator is non-vanishing.  As we saw in the proofs of Theorems \ref{ddms} and \ref{allbdd}, we can always do this locally.  In \cite{sarason}, this was achieved for the Hardy space through an application of the inner-outer factorization, but there is no such factorization theorem for functions in the Sobolev space.  This means we need to try something a little different.

Looking at our three poorly behaved functions we can rewrite them as quotients of functions in $W_0$ as follows: $$\begin{array}{ccc}
\sin(1/x)&=&\ds \frac{x^2(1-x)\sin(1/x)}{x^2(1-x)}\vspace{.2in}\\
\ds1/x&=&\ds \frac{x(1-x)}{x^2(1-x)}\vspace{.2in}\\
\exp(1/x)&=&\ds \frac{x(1-x)}{x(1-x)\exp(-1/x)}\end{array}$$

In the following theorem, a constructive method is described that finds such a ratio of $W_0$ functions.

\begin{theorem}\label{zerofree} If $\phi$ is a densely defined multiplier for $W_0$, then there exists $f \in D(M_\phi)$ such that $f(x) \neq 0$ on $(0,1)$.\end{theorem}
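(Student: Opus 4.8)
The plan is to exploit Theorem \ref{ddms}, which tells us that $\phi \in W^{1,2}[a,b]$ for every $[a,b]\subset(0,1)$; in particular $\phi$ is continuous (hence bounded) with $\phi' \in L^2$ on each such subinterval. Thus the only obstruction to finding a globally non-vanishing $f\in D(M_\phi)$ lives at the two endpoints, where $\phi$ may blow up arbitrarily fast. The goal is to construct a single $f\in W_0$ that is strictly positive on $(0,1)$ and decays toward $0$ and $1$ rapidly enough that $\phi f$ is tamed into $W_0$. By the symmetry of the two endpoints I would carry out the construction near $0$ (say on $(0,\tfrac12]$) and mirror it near $1$, patching in the middle where nothing is delicate.

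Concretely, I would fix dyadic points $a_n = 2^{-n}$ and the shrinking intervals $I_n = [a_{n+1},a_n]$ accumulating at $0$. On each $\overline{I_n}$ set $M_n = \sup_{I_n}|\phi|$ and $N_n^2 = \int_{I_n}|\phi'|^2$, both finite by Theorem \ref{ddms}. I then choose a strictly decreasing sequence of positive ``levels'' $c_n \downarrow 0$ and define $f$ to be the continuous, positive, piecewise-linear interpolant taking the value $c_n$ at $a_n$ (extended symmetrically near $1$ and kept positive in the middle), so that automatically $f>0$ on $(0,1)$ and $f(0)=f(1)=0$. The whole argument rests on choosing the $c_n$ \emph{small enough}, recursively, that every relevant series converges.

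For the verification I would use the product rule $(\phi f)' = \phi' f + \phi f'$ and estimate termwise. On $I_n$ one has $\int_{I_n}|\phi f|^2 \le M_n^2 c_n^2 |I_n|$, $\int_{I_n}|\phi f'|^2 \le M_n^2 \int_{I_n}|f'|^2 \le M_n^2 c_n^2/|I_n|$, and $\int_{I_n}|\phi' f|^2 \le c_n^2 N_n^2$, together with the bare estimate $\int_{I_n}|f'|^2 \le c_n^2/|I_n|$ needed for $f$ itself. Since each of these bounds is a product of $c_n^2$ with a fixed finite quantity, choosing $c_n$ so that every $n$-th term is at most $2^{-n}$ (and $M_n c_n \to 0$) makes all the sums finite. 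Summing over $n$ then gives $f\in W^{1,2}$ and $\phi f\in W^{1,2}$ near $0$, while $|\phi f|\le M_n c_n \to 0$ forces $(\phi f)(0)=0$; the same near $1$. Hence $f\in W_0$ and $\phi f\in W_0$, i.e. $f\in D(M_\phi)$, with $f$ non-vanishing on $(0,1)$.

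The main obstacle I anticipate is precisely the endpoint behavior: because $\phi$ can carry an essential singularity (as with $e^{1/x}$, or something growing even faster), no fixed algebraic weight such as a power of $x(1-x)$ can work uniformly, so the decay of $f$ must be adapted to the a priori unbounded growth of the $M_n$ and $N_n$. The most delicate bookkeeping is controlling $\int_{I_n}|f'|^2$ against the weight $M_n^2$, since the transition of $f$ between consecutive levels is what could ruin the $L^2$ bound on $(\phi f)'$; this is the step that dictates how quickly the $c_n$ must shrink, and where I would expect the technical care to concentrate.
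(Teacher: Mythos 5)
Your proposal is correct and follows essentially the same route as the paper: a dyadic decomposition of a neighborhood of each endpoint, a positive piecewise-linear interpolant whose levels are chosen to decay fast enough relative to the local size of $\phi$, and termwise product-rule estimates showing $\phi f$ lands in $W^{1,2}$. The only substantive difference is at the endpoint: by additionally enforcing $M_n c_n \to 0$ you obtain $(\phi f)(0)=0$ from a pointwise bound, which lets you bypass the paper's device of multiplying $h = \phi f$ by $x$ to repair absolute continuity at the origin.
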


\begin{proof}First if $\phi \in W^{1,2}[0,1]$, we are finished trivially by writing $\phi(x)= \frac{x(1-x)g(x)}{x(1-x)}$.  We will assume that $\phi \not\in W^{1,2}[0,1]$.  First by Theorem \ref{ddms} $\phi$ is locally absolutely continuous on $(0,1)$, so if $\phi \not \in W^{1,2}[0,1]$ then either $\phi \not \in W^{1,2}[0,1/2]$ and/or $\phi \not \in W^{1,2}[1/2,1]$.  Assume without loss of generality that $\phi \not \in W^{1,2}[0,1/2]$.  The function $\phi$ is absolutely continuous on $(0,1/2)$ but not in $W^{1,2}[0,1/2]$ which means either $\phi$ or $\phi^\prime$ is not in $L^2[0,1/2]$. We know by Theorem \ref{ddms} that $\phi \in W^{1,2}[a,\frac12]$ for each $a > 0$, but $\phi \not\in W^{1,2}[0,1]$.  

Both of the functions $\phi$ and $\phi^\prime$ are in $L^2[a, \frac12]$ for all $a > 0$: $\int_a^{.5} |\phi|^2 + |\phi^\prime|^2 dx < \infty$. Construct the increasing sequence:
$$a_n = \int_{\frac{1}{2^{n+1}}}^{\frac{1}{2^n}} |\phi|^2 + |\phi^\prime|^2 dx.$$
Define $b_n = \min\left\{ (a_n)^{-1}, (a_{n-1})^{-1}, 1\right\}$.  Notice that $a_n b_{n+1}$, $a_n b_n$ and $b_n \le 1$ for all $n$.

Now we can begin constructing our non-vanishing function $f$. Let $f$ be the function that linearly interpolates the points $\left\{(\frac{1}{2^n}, \frac{b_n}{2^{2n}})\right\}_{n=1}^{\infty}$.  Also define $f(0)=0$, and note that $\lim_{x\to0^+} f(x) = 0$.  To be more precise we define auxiliary functions $L_n(x)$ by:
$$L_n(x) = \left\{ \begin{array}{lcl}\frac{4(b_n) - (b_{n+1})}{2^{n+1}}(x-2^{-n}) + (2^{-2n}(b_n)) &:& x \in (2^{-(n+1)}, 2^{-n}]\\
0 &:& \text{otherwise} \end{array}\right.$$


Now $f$ can be written as $f = \sum_{n=1}^{\infty} L_n(x)$.  The function $f$ is continuous on $[0,\frac12]$ and differentiable almost everywhere.  By using calculations with $L_n$ it is straightforward to show $f, f^\prime \in L^2[0,\frac12]$, since the slopes of each of these functions was chosen to decrease geometrically.  Thus $f \in W^{1,2}[0,\frac12]$ and $f(0) = 0$.

The function $\phi f$ is continuous on $(0,1/2)$ and differentiable almost everywhere.  We wish to show that both $\phi f$ and $(\phi f)^\prime = \phi^\prime f + f^\prime \phi$ are in $L^2[0,\frac12]$.  First we have:
$$\begin{array}{rcl}\ds\int_0^{0.5}|\phi f|^2dx &=&\ds \sum_{n=1}^{\infty}\int_{1/2^{n+1}}^{1/2^n}|\phi L_n|^2dx\\ &\le&\ds \sum_{n=1}^{\infty} a_n \max\left\{\left(\frac{b_n}{2^{2n}}\right)^2, \left(\frac{b_{n+1}}{2^{2(n+1)}}\right)^2\right\}<\infty.\end{array}$$
Similarly
$$\begin{array}{rcl}\ds\int_0^{0.5}|\phi^\prime f|^2dx &=&\ds \sum_{n=1}^{\infty}\int_{1/2^{n+1}}^{1/2^n}|\phi^\prime L_n|^2dx\\ &\le&\ds \sum_{n=1}^{\infty} a_n \max\left\{\left(\frac{b_n}{2^{2n}}\right)^2, \left(\frac{b_{n+1}}{2^{2(n+1)}}\right)^2\right\}<\infty,\end{array}$$
and
$$\begin{array}{rcl}\ds\int_0^{0.5}|\phi f^\prime|^2dx &=&\ds \sum_{n=1}^{\infty}\int_{1/2^{n+1}}^{1/2^n}|\phi L_n^\prime|^2dx\\ &\le&\ds \sum_{n=1}^\infty a_n \left( \frac{4(b_n) - (b_{n+1})}{2^{n+1}} \right)^2 < \infty.\end{array}$$
Here we see each integral is dominated by a geometric series, and so $\phi f, (\phi f)^\prime \in L^2[0,1/2]$.  Next show that $\phi f$ is absolutely continuous on $[0,1]$. Notice that $\phi f$ and $(\phi f)^\prime$ are in $L^2[0,1/2]$, and $\phi f$ is absolutely continuous on $[a,1/2]$ for any $0<a<1/2$.  For the moment call $h = \phi f$.  
We will show that $xh(x)$ is in $W_0$. On every interval $(a,1/2]$, $xh(x)$ is absolutely continuous, which is equivalent to $$ah(a) = \frac12 h\left( \frac12 \right) + \int_{a}^{\frac12} \frac{d}{dt} (t h(t) ) dt$$ for all $0<a<0.5$. Showing that this holds for $a=0$, demonstrates that $xh(x)$ is absolutely continuous on $[0,1]$.  First note that
$$\begin{array}{rcl}\ds \lim_{x\to0^+} |x h(x)| &=&\ds \lim_{x\to0^+} \left|x h\left( \frac12 \right) + x \int_{\frac12}^{x} h^\prime(t) dt \right| \vspace{.05in}\\ &\le&\ds \lim_{x\to0^+} \left( |x| \left|h\left( \frac12 \right)\right| + |x| \int_x^{\frac12} |h^\prime(t)|dt\right).\end{array}$$
The last limit is zero, since $h^\prime \in L^2[0,1/2] \subset L^1[0,1/2]$.  Thus $xh(x)$ can be defined as a continuous function on $[0,1]$ by declaring this function as zero at $x=0$.  Finally, $$\begin{array}{rcl}\ds \frac12 h\left(\frac12\right) + \int_{0}^{1/2} \frac{d}{dt} \left( th(t) \right) dt &=&\ds \lim_{a \to 0^+} \left(\frac12 h\left(\frac12\right) + \int_{a}^{1/2} \frac{d}{dt} \left( th(t) \right) dt\right) \vspace{.1in}\\
&=&\ds \lim_{a \to 0^+} \left(\frac12 h \left( \frac12 \right) + \left( ah(a) - \frac12 h\left( \frac12 \right) \right)\right) \vspace{.1in}\\
&=&\ds \lim_{a \to 0^+} ah(a) = 0\end{array}.$$

This proves that $xh(x)$ is absolutely continuous on $[0,1/2]$.  Therefore, the function $xh$ is in $W^{1,2}[0,1/2]$ and $(xh)(0)=0$.  Thus if we construct $f$ as above for both the left half of $[0,1]$ and the right half, then the function $x(1-x) f$ does not vanish on $(0,1)$ and is in $D(M_\phi) \subset W_0$.\end{proof}

\section{Remarks}

We leave with one last note concerning densely defined multipliers on the Sobolev space.  We know that if a multiplier is bounded, then its symbol is bounded by the norm of the operator.  The question arrises, if $g$ is known to be a densely defined multiplication operator over $W_0$ and $\sup_{x\in (0,1)} |\phi(x)| < \infty$ is $M_\phi$ a bounded multiplier?

The answer is: not necessarily.  We can produce a counterexample by examining $$\phi(x) = \sqrt{1/4-(x-1/2)^2}$$ which is bounded on $[0,1]$ by 1/2.  By the work above, we know that $M_\phi$ is a densely defined multiplier, since $\phi \in W^{1,2}[a,b]$ for every $[a,b] \subset (0,1)$.  However, since $\phi^\prime$ is not bounded on $[0,1]$, $M_\phi 1 = \phi\cdot 1 \not \in W_0$.  Therefore even though $\phi$ is a bounded function, the multiplier $M_\phi$ is not bounded.

\bibliographystyle{plain}
\bibliography{sobolev}
\end{document}